\documentclass[13pt]{article}  
\usepackage{amssymb}              
\usepackage{amsthm}
\usepackage{amsmath}
\usepackage{eucal}
\usepackage{verbatim}
\usepackage[dvips]{graphicx}
\usepackage{multirow}
\usepackage{fancyhdr}
\usepackage{color}
\usepackage{enumerate}
\usepackage{calrsfs}
\usepackage{fullpage}
\usepackage{amsmath}
\usepackage{eufrak}

\usepackage{hyperref}

\overfullrule=5pt

\usepackage[pagewise]{lineno} 

\newtheorem{theorem}{Theorem}
\newtheorem{lemma}{Lemma}

\newtheorem{remark}{Remark}

\makeatletter
\newcommand{\leqnomode}{\tagsleft@true}
\newcommand{\reqnomode}{\tagsleft@false}
\makeatother

\def\({\begin{eqnarray}}
\def\){\end{eqnarray}}
\def\[{\begin{eqnarray*}}
\def\]{\end{eqnarray*}}
\def\part#1#2{\frac{\partial #1}{\partial #2}}

\def\R{\mathbb{R}}
\def\N{\mathbb{N}}
\def\d{\mathrm{d}}
\def\tot#1#2{\frac{\d #1}{\d #2}}
\def\eps{\varepsilon}

\def\s{{\mathfrak{s}}}

\def\grad{\nabla}

\def\upsi{\underline\psi}
\def\I{\mathcal{I}}

\def\W{\mathcal{W}}
\def\P{\mathbb{P}}

\def\rev#1{{#1}}


\begin{document}

\title{Direct proof of unconditional asymptotic consensus in the Hegselmann-Krause model with transmission-type delay}   
\author{Jan Haskovec\footnote{Computer, Electrical and Mathematical Sciences \& Engineering, King Abdullah University of Science and Technology, 23955 Thuwal, KSA.
jan.haskovec@kaust.edu.sa}}

\date{}

\maketitle

\begin{abstract}
We present a direct proof of asymptotic consensus in the nonlinear Hegselmann-Krause model with transmission-type delay,
where the communication weights depend on the particle distance in phase space.
Our approach is based on an explicit estimate of the shrinkage of the group diameter on finite time intervals
and avoids the usage of Lyapunov-type functionals or results from nonnegative matrix theory.
It works for both the original formulation of the model with communication weights scaled by the number of agents,
and the modification with weights normalized a'la Motsch-Tadmor.
We pose only minimal assumptions on the model parameters.
In particular, we only assume global positivity of the influence function, without imposing any conditions on its decay rate or monotonicity.
Moreover, our result holds for any length of the delay.
\end{abstract}
\vspace{2mm}

\textbf{Keywords}: Hegselmann-Krause model, asymptotic consensus, long-time behavior, delay.
\vspace{2mm}


\vspace{2mm}

\section{Introduction}\label{sec:Intro}
In this paper we study asymptotic behavior of the Hegselmann-Krause \cite{HK}
model with transmission-type time delay.
The Hegselmann-Krause model describes the evolution of $N\in\N$ agents who adapt their opinions to the ones of other members of the group.
Agent $i$'s opinion is represented by the quantity $x_i = x_i(t)\in\R^d$, with $d\in\N$ the space dimension, which is a function of time $t\geq 0$,
and evolves according to
\(   \label{eq:class}
   \dot x_i(t) = \sum_{j=1}^N \psi_{ij}(t) (x_j(t) - x_i(t)), \qquad i=1,\ldots,N.
\)
The communication weights $\psi_{ij}=\psi_{ij}(t)$ measure the intensity
of the influence between agents depending on the dissimilarity of their opinions.
In the classical setting \cite{HK} the communication weights are given by
\(  \label{psi:class}
    \psi_{ij}(t) = \frac{1}{N} \psi(|x_j(t) - x_i(t)|),
\)
where the nonnegative continuous \emph{influence function} $\psi:[0,\infty)\to [0,\infty)$,
also called \emph{communication rate},
measures how strongly each agent is influenced by others depending on their ``opinion distance".
Without loss of generality we may impose the global bound $\psi\leq 1$
(this can be always achieved by an eventual rescaling of time).

For many applications in biological and socio-economical systems \cite{Smith} or control problems (for instance, swarm robotics \cite{Hamman}),
it is natural to include a time delay in the model reflecting the time needed for each agent to receive information from other agents.
We therefore assume that agents' communication takes place subject to a time delay $\tau>0$,
i.e., agent $i$ with opinion $x_i(t)$ receives at time $t>0$ the information about the opinion of agent $j$
in the form $x_j(t-\tau)$.
The opinions evolve then according to the following variant of \eqref{eq:class},
\( \label{eq:HK}
   \dot x_i(t) = 
     \sum_{j\neq i} \psi_{ij}(t) (x_j(t-\tau) - x_i(t)), \qquad i=1,\ldots,N,
\)
where in the right-hand side we exclude the unjustified ``self-interaction" term $x_i(t-\tau) - x_i(t)$,
i.e., the summation runs over all $j\in \{1,2,\dots,N\} \setminus \{i\}$.
The system \eqref{eq:HK} is subject to the initial datum
\(\label{IC:HK}
   x_i(s) = x^0_i(s),\qquad i=1,\cdots,N, \quad s \in [-\tau,0],
\)
with prescribed trajectories $x^0_i\in C([-\tau,0])$, $ i=1,\cdots,N$.
Clearly, the presence of the delay shall be also reflected in the expression
for the communication weights $\psi_{ij}=\psi_{ij}(t)$.
We thus introduce the following modification of \eqref{psi:class},
\(   \label{psi}
   \psi_{ij}(t) :=  \frac{1}{N-1}   \psi(|x_j(t-\tau) - x_i(t)|).
\)
As pointed out in \cite{MT} for the second-order version of \eqref{eq:HK}, aka the Cucker-Smale system,
the scaling by $1/(N-1)$ in \eqref{psi} has the drawback that the dynamics of an agent is modified by the
total number of agents even if its dynamics is only significantly influenced by a few nearby
agents. Therefore, \cite{MT} proposed to normalize the communication weights relative to the influence
of all other agents, without involving explicit dependence on their number.
In the context of the delay system \eqref{eq:HK}, the normalized weights are given by
\(\label{psi:r0}
   \psi_{ij}(t) := 
   \frac{\psi(|x_j(t-\tau) - x_i(t)|)}{\sum_{\ell=1}^N \psi(|x_\ell(t-\tau) - x_i(t)|)}.
\)
However, since we excluded unjustified the ``self-interaction" term $x_i(t-\tau) - x_i(t)$ from the right-hand side of \eqref{eq:HK},
it seems appropriate to exclude the term $\psi(|x_i(t-\tau) - x_i(t)|)$ from the normalization as well, leading to
\(\label{psi:r1}
   \psi_{ij}(t) :=
        \frac{\psi(|x_j(t-\tau) - x_i(t)|)}{\sum_{\ell\neq i} \psi(|x_\ell(t-\tau) - x_i(t)|)}.
\)
However, our methods and results apply to any of the formulae \eqref{psi}--\eqref{psi:r1}.
In fact, the crucial property of the communication weights $\psi_{ij}=\psi_{ij}(t)$
that we impose for the rest of the paper is the following upper bound
\(  \label{psi:prop}
   \sum_{j\neq i} \psi_{ij}(t) \leq 1 \qquad\mbox{for all } i=1,\cdots, N \mbox{ and } t\geq 0.
\)
This fairly general assumption is obviously verified by the ``classical" weights \eqref{psi},
recalling that, without loss of generality, $\psi\leq 1$.
Clearly, it also holds for the normalized weights \eqref{psi:r0} and, with equality, for \eqref{psi:r1}.

The phenomenon of consensus finding in the context of \eqref{eq:HK} refers
to the (asymptotic) emergence of one or more {opinion clusters} formed
by agents with (almost) identical opinions \cite{JM}.
{Global consensus} is the state where all agents have the same opinion, i.e.,
$x_i=x_j$ for all $i,j \in\{1,\dots,N\}$.
\emph{Global asymptotic consensus} for the system \eqref{eq:HK} is then defined as the property
\( \label{def:consensus}
    \lim_{t\to\infty} |x_i(t) - x_j(t)| = 0 \qquad\mbox{for all } i,j\in \{1,\dots,N\}.
\)
The goal of this paper is to provide a proof of asymptotic global consensus
for the system \eqref{eq:HK} with an arbitrary delay length $\tau>0$ by explicitly estimating
the shrinkage of the group diameter in time.
The proof requires global positivity of the influence function,
\(  \label{psi:pos}
     \psi(s)>0 \qquad \mbox{for all } s\geq 0,
\)
which, for instance, includes the generic choice $\psi(s) = 1/(1+s^2)^\beta$ with $\beta\geq 0$
typically considered in consensus and flocking models.
Let us note that the global positivity property \eqref{psi:pos} is obviously
necessary for \emph{global} consensus to be reached in general.
Indeed, if $\psi$ was allowed to vanish even pointwise, i.e.,
$\psi(s)=0$ for some $s>0$, then one would have steady states
formed by two clusters of particles at distance $s$ apart.
Let us note that we do not require any monotonicity properties of $\psi$.


For the proof we take a direct approach, avoiding the usage of any Lyapunov-type functionals
or results from nonnegative matrix theory.
We first consider the spatially one-dimensional setting
and derive a uniform bound on the group diameter in terms of the initial datum.
Then, we obtain an explicit estimate on the diameter shrinkage
on finite time intervals. By an iterative argument we then conclude asymptotic convergence
of the group diameter to zero, i.e., consensus finding. Finally, we extend the result
to the multi-dimensional setting by observing that the method applies
for arbitrary 1D projections of the system.
The disadvantage of our method is that it does not provide convergence rates.
However, as the diameter shrinkage estimate is uniform with respect to the number of agents,
we are able to provide an extension of the consensus result to the mean-field limit
of the discrete system \eqref{eq:HK}.

Indeed, letting $N\to\infty$ in \eqref{eq:HK} leads to the conservation law
\(  \label{mf:HK}
   \partial_t f + \grad_x\cdot (F[f] f) = 0,
\)
for the time-dependent probability measure $f=f(t,x)$ which describes the probability
of finding an agent at time $t\geq 0$ located at $x\in\R^d$;
we refer to \cite{CCR} for details.
For the ``classical" communication weights \eqref{psi} the operator $F=F[f]$ is defined as
\(  \label{F}
   F[f](t,x) := \int_{\R^d} \psi(|x-y|) (y-x) f(t-\tau,y) \d y,   
\)
while for the normalized weights \eqref{psi:r0} or \eqref{psi:r1} we have
\(  \label{Fn}
   F[f](t,x) := \frac{\int_{\R^d} \psi(|x-y|) (y-x) f(t-\tau,y) \d y}{\int_{\R^d} \psi(|x-y|) f(t-\tau,y) \d y}.
\)
The system is equipped with the initial datum
$f(t) = f^0(t)$, $t\in [-\tau,0]$, with $f^0 \in C([-\tau,0], \P(\R^d))$,
where $\P(\R^d)$ denotes the set of probability measures on $\R^d$.
We shall assume that the initial datum is uniformly compactly supported,
i.e., 
\(   \label{IC:mf:HK}
   \sup_{s\in [-\tau,0]} d_x[f^0(s)] < \infty,
\)
where the diameter $d_x[h]$ for a probability measure $h\in \P(\R^d)$ is defined as
\(   \label{def:diam}
   d_x[h] := \sup \{ |x-y|,\, x,y\in \mathrm{supp}\, h \}.
\)

Convergence to global consensus as $t\to\infty$ for the system \eqref{eq:HK},
both with \eqref{psi} or \eqref{psi:r1}, has been proved in \cite{CPP} under a set of conditions requiring smallness of the maximal time delay
in relation to the fluctuation of the initial datum.
In contrast to this work, our paper offers a global consensus result under significantly weaker assumptions
- namely, for any time delay length, without restrictions on the decay of the influence function
(only assuming global positivity) and without smallness of the fluctuation of the initial datum.
In \cite{H} a simple proof of global consensus was given for the system \eqref{eq:HK}, however, exclusively with the
normalized communication weights \eqref{psi:r1}. This is because the method of proof is based
on a geometric argument, exploiting the convexity property of the weights \eqref{psi:r1},
namely, that  $\sum_{j\neq i} \psi_{ij} =1$ for all $i=1,\cdots, N$. Consequently,
it does not apply to the ``classical" case \eqref{psi}.

Models of consensus finding and flocking with delay have been extensively studied
in the Engineering community, see, e.g., \cite{E01,E2, Olfati-Saber},
typically based on tools from matrix theory, algebraic graph theory, and control theory.
For linear problems, where the communication weights $\psi_{ij}$ are fixed,
stability criteria based on the frequency approach and on Lyapunov-Krasovskii techniques
were derived in \cite{E3A}. We note that although our approach focuses on the nonlinear
setting with $\psi_{ij}$ nonlinearly depending on the agent configuration in phase space,
it of course could be easily modified to apply to the linear setting as a special case.
A simple proof of delay-independent consensus and flocking
in nonlinear networks with multiple time-varying delays under similarly mild assumptions as ours
was provided in \cite{E6}. This approach is based on fundamental concepts from the non-negative matrix theory
and a Lyapunov-Krasovskii functional. In contrast, our method estimates directly the
shrinkage of the group diameter, avoiding the use of Lyapunov-type functionals
and non-negative matrix theory.

The paper is organized as follows. In Section \ref{sec:mam} we formulate
our asymptotic consensus results for the discrete model \eqref{eq:HK}
and its mean-field limit \eqref{mf:HK}. In Section \ref{sec:proof} we provide the proof
for the discrete model, and in Section \ref{sec:cont} for the mean-field limit.

\section{Main results}\label{sec:mam}
Our main result regarding the global consensus behavior of the discrete
Hegselmann-Krause model with delay \eqref{eq:HK}--\eqref{IC:HK} is as follows.

\begin{theorem}\label{thm:HK}
Let the influence function $\psi\leq 1$ be continuous and strictly positive on $[0, \infty)$,
and let the weights $\psi_{ij}$ verify \eqref{psi:prop}.
Then all solutions of \eqref{eq:HK}--\eqref{IC:HK} reach global asymptotic consensus as defined by \eqref{def:consensus}.
\end{theorem}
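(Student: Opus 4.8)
The plan is to work first in the one-dimensional setting $d=1$ and then lift to $\R^d$ by applying the 1D argument to each coordinate (or, more precisely, to an arbitrary unit-vector projection $e\cdot x_i(t)$, noting that the projected system has exactly the same structure as \eqref{eq:HK} with the same weights $\psi_{ij}$). In 1D, set $m(t) := \min_i x_i(t)$ and $M(t) := \max_i x_i(t)$, so that the group diameter is $d(t) := M(t) - m(t)$; these are Lipschitz functions of $t$, differentiable a.e. Since each $\dot x_i$ is a convex-combination-type pull toward a weighted average of \emph{past} positions, with weights summing to at most $1$ by \eqref{psi:prop}, one expects the bracketed factor $\sum_{j\neq i}\psi_{ij}(t)(x_j(t-\tau)-x_i(t))$ to be controlled by the \emph{maximal} spread seen over the delay window. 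Concretely, for the index $i$ realizing the maximum at time $t$ one has
\[
   \dot M(t) \;\le\; \sum_{j\neq i}\psi_{ij}(t)\bigl(x_j(t-\tau) - x_i(t)\bigr) \;\le\; \sum_{j\neq i}\psi_{ij}(t)\bigl(\overline{M}(t) - M(t)\bigr),
\]
where $\overline{M}(t) := \sup_{s\in[t-\tau,t]} M(s)$; and symmetrically for $\dot m(t)$ with $\underline{m}(t) := \inf_{s\in[t-\tau,t]}m(s)$. Step one is thus to extract from this a uniform-in-time a priori bound on $d(t)$ (and on $M$, $m$ individually) in terms of the initial data over $[-\tau,0]$: because the velocities point inward relative to the delayed hull, one shows $M(t)$ cannot exceed $\sup_{s\in[-\tau,0]}M(s)$ plus a controlled amount, and more carefully that $M$ is eventually nonincreasing in a suitable envelope sense, giving $\sup_{t\ge-\tau} d(t) =: D_0 < \infty$.

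The heart of the matter is step two: an explicit estimate showing that on a finite time interval of fixed length the diameter shrinks by a definite factor. Here is where global strict positivity \eqref{psi:pos} enters. Given the uniform bound $D_0$, the argument of $\psi$ appearing in every weight is $|x_j(t-\tau)-x_i(t)| \le $ (something controlled by $D_0$ and the Lipschitz constant of the trajectories over one delay window, hence $\le$ some $R_0$ depending only on $D_0$ and $\tau$); since $\psi$ is continuous and strictly positive, $\psi_{\min} := \min_{[0,R_0]}\psi > 0$. For the "classical" weights this forces $\psi_{ij}(t) \ge \psi_{\min}/(N-1)$, so $\sum_{j\neq i}\psi_{ij}(t) \ge \psi_{\min}$; for the normalized weights one similarly gets $\sum_{j\neq i}\psi_{ij}(t)=1$ or $\ge$ a positive constant — in all cases \eqref{psi:prop} is complemented by a \emph{lower} bound $\sum_{j\neq i}\psi_{ij}(t)\ge\kappa>0$ with $\kappa$ depending only on the data. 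The plan is then to fix the agent $i^-$ near the current minimum and the agent $i^+$ near the current maximum and follow their trajectories over a window of length, say, $2\tau$ (or $T$ to be chosen): using $\dot x_{i^-}(t) \ge \kappa\,(\text{average of delayed positions} - x_{i^-}(t))$ and the fact that all delayed positions lie in $[m(\cdot),M(\cdot)]$, one shows $x_{i^-}$ is dragged upward by at least a fixed fraction of the current gap, and symmetrically $x_{i^+}$ downward; combining, $d(t+T) \le (1-\delta)\, d(t)$ for an explicit $\delta = \delta(\kappa,\tau,T,D_0)\in(0,1)$ that is \emph{independent of $N$ and of the particular time $t$}, as long as $d$ is compared against its envelope over the preceding window. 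The care needed is bookkeeping with the delayed hull: one must control $\overline M(t)-M(t)$ and $M(t)-\underline m(t)$ by $d$ on a slightly larger interval, which is where I expect the main technical obstacle — closing the loop between the "shrinkage over $[t,t+T]$" and the delayed quantities requires either choosing $T$ a multiple of $\tau$ and iterating window by window, or a short Grönwall-type estimate bounding the oscillation of $M$ over one delay window by a fraction of the diameter.

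Step three is the iteration: apply step two on consecutive intervals $[t_k, t_{k+1}]$ with $t_{k+1}-t_k = T$ to get $d(t_{k+1}) \le (1-\delta)\,d(t_k)$ — but one must be slightly careful because the shrinkage estimate naturally bounds $d(t_{k+1})$ in terms of the \emph{sup} of $d$ over $[t_k-\tau, t_k]$, not $d(t_k)$ alone; the clean way around this is to define the envelope $\mathfrak{D}_k := \sup_{s\ge t_k-\tau} d(s)$ (finite by step one), observe $\mathfrak{D}_k$ is nonincreasing, and show $\mathfrak{D}_{k+1} \le (1-\delta')\,\mathfrak{D}_k$ for a possibly smaller but still fixed $\delta'>0$; this geometric decay gives $\mathfrak{D}_k\to0$, hence $d(t)\to0$, i.e., 1D consensus. (One subtlety: $R_0$, and therefore $\psi_{\min}$ and $\delta'$, were fixed using the global bound $D_0$ from step one, so they do not deteriorate as $k\to\infty$ even though the diameter shrinks — strict positivity of $\psi$ on all of $[0,\infty)$ is exactly what makes a single uniform $\psi_{\min}$ legitimate; no decay or monotonicity of $\psi$ is used.) Finally, step four: for $x\in\R^d$, apply steps one–three to $t\mapsto e\cdot x_i(t)$ for $e$ ranging over a basis (or all unit vectors), concluding $\lim_{t\to\infty}|x_i(t)-x_j(t)|=0$ for all $i,j$, which is \eqref{def:consensus}. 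The one place demanding genuine vigilance is making every constant in step two manifestly independent of $N$ — this is what later enables the mean-field extension — and this is achieved precisely because the lower bound on $\sum_{j\ne i}\psi_{ij}$ is $N$-independent while the upper bound \eqref{psi:prop} is built in.
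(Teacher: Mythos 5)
Your skeleton coincides with the paper's (reduce to $d=1$ and project; a priori bound; quantitative shrinkage of the diameter on a window of fixed length; iterate), but the one genuinely hard step --- the shrinkage estimate --- is asserted rather than proved, and the specific claims you make about it do not hold as stated. First, a lower bound on the \emph{sum} $\sum_{j\neq i}\psi_{ij}$ is not the right mechanism: writing $\dot x_{i^-}=\bigl(\sum_j\psi_{i^-j}\bigr)(\bar x - x_{i^-})$ with $\bar x$ the normalized weighted average of delayed positions, nothing prevents $\bar x$ from sitting essentially at the minimum, because the \emph{relative} weight carried by the agents near the top can be as small as order $1/(N-1)$. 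What is actually needed, and what the paper uses, is the lower bound \eqref{upsi} on each \emph{individual} weight, $\psi_{ij}\geq\upsi$ with $\upsi$ as in \eqref{def:upsi}, so that every agent feels a definite pull toward the extremal agent; for the classical weights \eqref{psi} this is of order $1/(N-1)$, so your insistence that all constants in Step 2 are $N$-independent is not correct (harmless for Theorem \ref{thm:HK} where $N$ is fixed, but it signals that the mechanism you describe is not the one doing the work). Second, the ``bookkeeping with the delayed hull'' that you yourself flag as the main technical obstacle is precisely where the proof lives, and you leave it open: the agent achieving the extremum at some time in $[-\tau,0]$ may have drifted far from the extremum by the time its delayed position enters the other agents' equations. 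The paper resolves this with a dwell-time argument: the speed bound \eqref{s} forces the minimizing agent to remain below the midpoint $(m+M)/2$ on an interval of length $\sigma$ given by \eqref{sigma}, and only during (the $\tau$-shift of) that interval is the contraction harvested; the extremal agents $L$, $R$ then require a separate second step.

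Third, and most consequential for the logic of your Step 3: the per-window contraction factor obtained this way is \emph{not} uniform in the current diameter. The speed bound is $|\dot x_i|\leq M$ (an absolute constant), while the gap to be crossed is $(M_k-m_k)/2$, so the dwell time is only $\sigma_k=\min\{\tau,(M_k-m_k)/(2M)\}$, which degenerates as the diameter shrinks; correspondingly the factor $\Gamma_k$ tends to $0$ with $D_k$. Your claim ``$d(t_{k+1})\leq(1-\delta)\,d(t_k)$ with a fixed $\delta$, hence geometric decay'' therefore does not follow from the estimates available, and the weaker true statement does not by itself give decay: a product $\prod_k(1-\Gamma_k)$ with $\Gamma_k\in(0,1)$ need not vanish. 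The paper closes this gap by a different final argument: from $D_{k+1}\leq(1-\widetilde\Gamma(D_k))D_k$ with $\widetilde\Gamma$ as in \eqref{wtgamma} continuous and positive on $(0,\infty)$, the monotone limit $D$ must satisfy $\widetilde\Gamma(D)\leq 0$, forcing $D=0$ --- which is also exactly why no convergence rate is obtained. To rescue a uniform $\delta$ you would need an extra idea (e.g., bounding the speeds by the current diameter rather than by $M$), which the proposal does not supply. As written, the proposal names the correct strategy but neither proves the shrinkage lemma nor gives a valid route from the shrinkage one can actually prove to the conclusion $d(t)\to 0$.
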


An extension of the above consensus result for the mean-field limit \eqref{mf:HK}
is given by the following theorem,
which is a direct consequence of a stability estimate in terms of the
Monge-Kantorowich-Rubinstein distance, combined with the fact
that the consensus estimates derived in Section \ref{sec:proof} are uniform with respect
to the number of agents $N\in\N$.

\begin{theorem}\label{thm:mf:HK}
Let the influence function $\psi\leq 1$ be continuous and strictly positive on $[0, \infty)$.
Then all solutions $f=f(t)$ of \eqref{mf:HK} with $F=F[f]$ given either by \eqref{F} or \eqref{Fn},
subject to the compactly supported initial datum \eqref{IC:mf:HK},
reach global asymptotic consensus in the sense
\[  
   \lim_{t\to\infty} d_x[f(t)] = 0,
\]
with the diameter $d_x[\cdot]$ defined in \eqref{def:diam}.
\end{theorem}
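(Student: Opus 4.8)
The plan is to transfer the discrete consensus result of Theorem \ref{thm:HK} to the continuum setting by a mean-field approximation argument, exploiting the fact — emphasized in the introduction — that the diameter shrinkage estimates of Section \ref{sec:proof} are \emph{uniform} in the number of agents $N$. First I would record the basic well-posedness and support-propagation facts for \eqref{mf:HK}: given the compactly supported, continuous-in-time initial datum $f^0$ satisfying \eqref{IC:mf:HK}, the measure $f(t)$ stays compactly supported for all $t\geq 0$ and is transported along the characteristic flow $X(t;s,x)$ solving $\dot X = F[f](t,X)$, so that $f(t) = X(t;0,\cdot)_\# f^0(0)$ on $[0,\infty)$ (with the obvious modification involving the history on $[-\tau,0]$). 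In particular $d_x[f(t)]$ is finite for every $t$; this is the continuum analogue of the uniform diameter bound established in Section \ref{sec:proof} for the discrete system.

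Next I would set up the approximation. Fix $f^0$ and, for each $N$, choose empirical measures $f^0_N(s) = \frac1N\sum_{i=1}^N \delta_{x_i^0(s)}$ approximating $f^0(s)$ in the Monge--Kantorovich--Rubinstein (i.e.\ $1$-Wasserstein) distance, uniformly in $s\in[-\tau,0]$, with the supports of all $f_N^0(s)$ contained in a fixed compact set. The empirical measure $f_N(t)$ built from the solution $(x_1(t),\dots,x_N(t))$ of the discrete system \eqref{eq:HK} — with weights \eqref{psi} in the case of \eqref{F}, or \eqref{psi:r1} in the case of \eqref{Fn} — is then a weak (measure) solution of \eqref{mf:HK} with initial datum $f_N^0$. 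The key technical step is a stability estimate: on any finite interval $[0,T]$,
\[
   \sup_{t\in[0,T]} \W_1\bigl(f_N(t), f(t)\bigr) \leq C(T)\, \sup_{s\in[-\tau,0]} \W_1\bigl(f_N^0(s), f^0(s)\bigr),
\]
obtained by a Gronwall argument along the characteristic flows, using the local Lipschitz continuity of the velocity field — here one uses the continuity and strict positivity of $\psi$ together with the uniform-in-$N$ support bound, which keeps the denominator in \eqref{Fn} bounded away from zero. Since diameter is continuous with respect to $\W_1$ on measures with uniformly bounded support, it follows that $d_x[f_N(t)] \to d_x[f(t)]$ for each fixed $t$.

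Finally I would combine the pieces. By Theorem \ref{thm:HK}, for each $N$ the discrete diameter $d_x[f_N(t)]$ tends to $0$ as $t\to\infty$; crucially, the explicit shrinkage estimate from Section \ref{sec:proof} gives, for a suitable increasing sequence of times $t_k\to\infty$, a bound $d_x[f_N(t_k)] \leq \theta_k\, d_x[f_N(0)] + (\text{terms involving the history})$ with $\theta_k\to 0$ \emph{independently of} $N$. Passing $N\to\infty$ in this uniform estimate, using $d_x[f_N(\cdot)] \to d_x[f(\cdot)]$ and the uniform bound on $d_x[f^0_N]$, yields the same shrinkage estimate for $d_x[f(t_k)]$, and hence $d_x[f(t)] \to 0$ as $t\to\infty$. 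The main obstacle I anticipate is the stability estimate in the normalized case \eqref{Fn}: one must check that the characteristic field stays Lipschitz with constants depending only on $T$ and the uniform support bound — the denominator $\int \psi(|x-y|) f(t-\tau,y)\,\d y$ is bounded below by $\min_{|z|\leq R}\psi(z) > 0$ on the relevant compact set, so this goes through, but it is the step that genuinely uses \eqref{psi:pos} and the continuity of $\psi$ rather than just the weight bound \eqref{psi:prop}.
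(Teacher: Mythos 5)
Your proposal is correct and follows essentially the same route as the paper: approximate $f^0$ by empirical measures, invoke the $N$-uniform diameter-shrinkage estimate from the proof of Theorem \ref{thm:HK}, and pass to the limit on finite time intervals via the Wasserstein stability estimate (which the paper simply imports from \cite{ChoiH1} as Theorem \ref{thm:stabHK} rather than re-deriving by Gronwall along characteristics as you sketch). The only caveat, which the paper glosses over equally, is that $d_x[\cdot]$ is in general only lower semicontinuous under $\W_1$-convergence, but that is precisely the inequality $d_x[f(t)]\leq\liminf_N d_x[f^N(t)]$ needed for the conclusion.
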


\section{Proof of Theorem \ref{thm:HK}}\label{sec:proof}
  
The proof of Theorem \ref{thm:HK} is based on the following three Lemmata.
We first restrict to the spatially one-dimensional setting $d=1$, i.e., $x_i(t)\in\R$.
Having established the result for the 1D setting, it is easily generalized to the multi-dimensional
situation by considering arbitrary one-dimensional projections, as explained at the end of this Section.

We start by proving the following result
showing that the agent group remains uniformly bounded for all times,
the bound being given by the initial datum.

\begin{lemma}\label{lem:stay}
Let $d=1$ and denote
\(   \label{mM}
   m := \min_{i=1,\cdots,N} \min_{t\in [-\tau,0]} x_i(t),\qquad
   M := \max_{i=1,\cdots,N} \max_{t\in [-\tau,0]} x_i(t).
\)
Then, along the solutions of \eqref{eq:HK},
\(   \label{stay}
   m \leq x_i(t) \leq M
\)
for all $i\in\{1,\dots,N\}$ and all $t\geq 0$.
\end{lemma}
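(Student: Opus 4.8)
The plan is to show that the maximum of all agent positions cannot increase (and symmetrically the minimum cannot decrease), via a barrier/contradiction argument adapted to the delay setting. Define $M(t) := \max_{i} \max_{s \in [-\tau, t]} x_i(s)$ for $t \geq 0$; by \eqref{mM}, $M(0) = M$, and $M$ is nondecreasing and continuous. The goal is to prove $M(t) = M$ for all $t \geq 0$. Suppose not; then there is a first time $t^\star > 0$ at which the running maximum is achieved at the current configuration, i.e.\ $x_k(t^\star) = M(t^\star)$ for some index $k$, with $\dot x_k(t^\star) > 0$ (more carefully: for $M(t)$ to exceed $M$, there must be a time and an index where $x_k$ touches the running maximum from below with nonnegative-then-positive slope; one picks such a point using continuity and the fact that $x^0_i \in C([-\tau,0])$ guarantees $M(0)$ is finite and attained).

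The key computation is then to bound $\dot x_k(t^\star)$ from above using the equation \eqref{eq:HK}. At $t = t^\star$ we have $x_k(t^\star) = M(t^\star) \geq x_j(t^\star - \tau)$ for every $j$, because $t^\star - \tau < t^\star$ and $M(\cdot)$ is the running maximum over $[-\tau, \cdot]$; hence every term $x_j(t^\star - \tau) - x_k(t^\star) \leq 0$. Since the weights $\psi_{ij}(t)$ are nonnegative (they are built from the nonnegative $\psi$), this gives
\[
   \dot x_k(t^\star) = \sum_{j \neq k} \psi_{kj}(t^\star)\bigl(x_j(t^\star-\tau) - x_k(t^\star)\bigr) \leq 0,
\]
contradicting $\dot x_k(t^\star) > 0$. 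The symmetric argument with $m(t) := \min_i \min_{s\in[-\tau,t]} x_i(s)$ shows the running minimum never drops below $m$. Together these yield \eqref{stay}.

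The main obstacle is the bookkeeping needed to make the "first touching time" argument rigorous in the presence of the delay: one must be careful that the running maximum $M(t)$, rather than the instantaneous maximum $\max_i x_i(t)$, is the right quantity, since an agent's velocity at time $t$ depends on positions at the retarded time $t-\tau$, and those could in principle lie above the current instantaneous maximum if the trajectory had dipped. Using the running maximum over the whole history $[-\tau, t]$ neutralizes this: for any $j$ and any $t \geq 0$ we automatically have $x_j(t-\tau) \leq M(t)$, which is exactly what the sign argument needs. A clean way to package this is: let $\epsilon > 0$, define $t_\epsilon = \sup\{ t \geq 0 : x_i(s) \leq M + \epsilon \text{ for all } i, \text{ all } s \leq t\}$, argue $t_\epsilon = \infty$ by showing that at a putative finite $t_\epsilon$ some agent has $x_k(t_\epsilon) = M+\epsilon$ with $\dot x_k(t_\epsilon) \geq 0$ while the right-hand side is $\leq 0$, then let $\epsilon \to 0$. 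The nonnegativity of $\psi_{ij}$ and the upper bound \eqref{psi:prop} (the latter not even strictly needed here, only $\psi_{ij} \geq 0$) are all that is used from the structure of the weights.
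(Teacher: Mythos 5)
Your plan is in essence the paper's own argument: both rest on the observation that every delayed position $x_j(t-\tau)$ is controlled by the historical maximum, so that an agent sitting at the barrier has nonpositive velocity, and both regularize with an $\eps$-shifted barrier before letting $\eps\to 0$. The paper organizes this as an induction over the intervals $[k\tau,(k+1)\tau]$ with barrier $(1+\eps)M$, whereas you use the running maximum over $[-\tau,t]$ to treat all times at once; that is an organizational difference only (though your barrier $M+\eps$ is slightly more robust, since $(1+\eps)M$ lies above $M$ only when $M>0$, a point the paper handles later by translation). The one step you cannot leave as stated is the contradiction itself. At a first touching time you cannot assert $\dot x_k(t^\star)>0$: a trajectory can exceed its previous maximum immediately after a point where its derivative vanishes. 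Likewise, in your $\eps$-packaging the conclusion at $t_\eps$ is $x_k(t_\eps)=M+\eps$ with $\dot x_k(t_\eps)\le 0$, which is perfectly compatible with $\dot x_k(t_\eps)\ge 0$ and yields no contradiction. The fix is to use the differential inequality on an interval rather than at a point: for $t\in(t_\eps,t_\eps+\tau]$ all delayed arguments still satisfy $x_j(t-\tau)\le M+\eps$, hence $\dot x_k(t)\le 0$ whenever $x_k(t)\ge M+\eps$, and a short first-crossing/monotonicity (or Gronwall) argument then shows no agent can rise above $M+\eps$ on that interval, contradicting the finiteness of $t_\eps$. This is precisely what the paper's explicit integration of $\dot x_i\le (1+\eps)M-x_i$, and its Lemma \ref{lem:aux}, are there to accomplish. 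Your remark that only $\psi_{ij}\ge 0$ is needed here, and not the upper bound \eqref{psi:prop}, is correct and is a small simplification relative to the paper's write-up.
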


\begin{proof}
Let us fix any $i\in\{1,\dots,N\}$.
Using \eqref{eq:HK} and \eqref{mM}, we have for $t\in (0,\tau]$,
\[
   \dot x_i (t) &=& \sum_{j=1}^N \psi_{ij}(t) (x_j(t-\tau) - x_i(t)) \\
      &\leq&  \sum_{j=1}^N \psi_{ij}(t) (M - x_i(t)).
\]
Therefore, choosing any $\eps>0$, we obviously have
\(   \label{eq:lem:stay:1}
   \dot x_i (t) \leq \sum_{j=1}^N \psi_{ij}(t) \bigl((1+\eps)M - x_i(t)\bigr)
\)
for $t\in (0,\tau]$. Since $x_i(0) \leq M$, there exists $T\in (0,\tau]$ such that
$x_i(t) < (1+\eps)M$ for all $t\in [0,T)$. Assume, for contradiction, that $T<\tau$.
Then by continuity $x_i(T) = (1+\eps)M$.
But, using \eqref{psi:prop} in \eqref{eq:lem:stay:1} gives
\[
   \dot x_i (t) \leq (1+\eps)M - x_i(t)
\]
for $t\in [0,T)$, and integration gives
\[
   x_i(T) &\leq& x_i(0) e^{-T} + (1-e^{-T}) (1+\eps)M \\
      &\leq& M e^{-T} + (1-e^{-T}) (1+\eps)M < (1+\eps)M,
\]
a contradiction. Taking the limit $\eps\to 0$  we conclude that
$x_i(t) \leq M$ on the interval $[0,\tau]$.
The lower bound $x_i(t) \geq m$ on $[0,\tau]$ is obtained analogously.
Applying this argument inductively on consecutive time intervals of length $\tau$,
the claim follows.
\end{proof}

Having established the uniform bound \eqref{stay},
and observing that the system \eqref{eq:HK} is translation invariant,
we may without loss of generality adopt the assumption
\(  \label{ass:mM}
   0 < m \leq M
\)
in the sequel.
Lemma \ref{lem:stay} provides also a uniform bound on the particle speeds $|\dot x_i|$.
Indeed, with $x_i(t) \geq m > 0$ and \eqref{psi:prop} we have
\[
   \dot x_i (t) = \sum_{j\neq i} \psi_{ij}(t) (x_j(t-\tau) - x_i(t))
      \leq \sum_{j\neq i} \psi_{ij}(t) x_j(t-\tau)
      \leq  \sum_{j\neq i} \psi_{ij}(t) M
      \leq M,
\]
and
\[
   \dot x_i (t) = \sum_{j\neq i} \psi_{ij}(t) (x_j(t-\tau) - x_i(t))
      \geq  - \sum_{j\neq i} \psi_{ij}(t) x_i(t)
      \geq - \sum_{j\neq i} \psi_{ij}(t) M
      \geq - M,
\]
so that
\(  \label{s}
   |\dot x_i| \leq M \qquad \mbox{for all } t \geq 0 \mbox{ and } i\in\{1,\dots,N\}.
\)
Moreover, Lemma \ref{lem:stay} implies that, for all $t \geq 0$ and any $i, j\in \{1,\dots,N\}$,
\[
   |x_j(t-\tau)-x_i(t)| \leq |x_j(t-\tau)| + |x_i(t)| \leq 2M.
\]
Consequently,
\[
   \psi(|x_j(t-\tau)-x_i(t)|) \geq \min_{s\in [0,2M]} \psi(s).
\]
Let us denote
\(  \label{def:upsi}
    \upsi := \frac{1}{N-1} \min_{s\in [0,2M]} \psi(s),
\)
and note that $\upsi>0$ due to the assumption \eqref{psi:pos} of global positivity of $\psi$.
Then for both \eqref{psi} and \eqref{psi:r1} we obviously have (for \eqref{psi:r1} recall that $\psi\leq 1$),
\(  \label{upsi}
   \psi_{ij}(t) \geq \upsi \qquad \mbox{for all } t \geq 0 \mbox{ and } i, j\in\{1,\dots,N\}.
\)

In the sequel we shall use the following simple auxiliary lemma.

\begin{lemma} \label{lem:aux}
Let $t_0<t_1$ and $y\in C^1([t_0, t_1])$ satisfy, for some $\kappa\in\R$ and $\lambda>0$,
\rev{
\(  \label{aux:cond}
   \dot y(t) \leq \lambda(\kappa - y(t)) \quad
      \mbox{for all } t \in (t_0,t_1) \mbox{ such that } y(t) > \kappa.
\)
}
Then for all $t \in [t_0,t_1]$,
\(   \label{aux}
   y(t) \leq \rev{\max \left( \kappa, y(t_0) \right) } e^{-\lambda (t-t_0)} + \left(1 - e^{-\lambda (t-t_0)} \right) \kappa. 
\)
\end{lemma}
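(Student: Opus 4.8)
The plan is to recognize the right-hand side of \eqref{aux} as the value at $t$ of the reference function
$\phi(t) := \max(\kappa, y(t_0))\, e^{-\lambda(t-t_0)} + \bigl(1 - e^{-\lambda(t-t_0)}\bigr)\kappa$,
and then to prove the comparison inequality $y(t)\le \phi(t)$ on $[t_0,t_1]$. The function $\phi$ has two features that make it the natural barrier: it solves the linear ODE $\dot\phi = \lambda(\kappa-\phi)$ with $\phi(t_0) = \max(\kappa, y(t_0)) \ge y(t_0)$, and it satisfies $\phi(t) \ge \kappa$ for all $t\in[t_0,t_1]$, since $\phi(t) - \kappa = \bigl(\max(\kappa,y(t_0)) - \kappa\bigr) e^{-\lambda(t-t_0)} \ge 0$. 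So the whole statement reduces to showing $y\le\phi$.

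The core is a first-crossing (continuity) argument, run with a slightly lifted barrier to handle the one delicate point, namely that \eqref{aux:cond} is only a \emph{conditional} differential inequality, active on the open set $\{y>\kappa\}$. Fix $\eps>0$, put $\phi_\eps := \phi + \eps$, and suppose toward a contradiction that $y(t) > \phi_\eps(t)$ for some $t$; let $t^* := \inf\{t\in[t_0,t_1] : y(t) > \phi_\eps(t)\}$. Using continuity of $y$ and $\phi_\eps$ and the strict inequality $y(t_0) < \phi_\eps(t_0)$, one checks that $t_0 < t^* < t_1$, that $y\le \phi_\eps$ on $[t_0,t^*]$ with $y(t^*) = \phi_\eps(t^*)$, and that there are $s_n \downarrow t^*$ with $y(s_n) > \phi_\eps(s_n)$. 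Now $y(t^*) = \phi_\eps(t^*) \ge \kappa + \eps > \kappa$, so \eqref{aux:cond} applies at $t^*$ and yields $\dot y(t^*) \le \lambda(\kappa - y(t^*))$, whereas $\dot\phi_\eps(t^*) = \dot\phi(t^*) = \lambda(\kappa - \phi(t^*)) = \lambda(\kappa - y(t^*)) + \lambda\eps > \dot y(t^*)$. Hence the $C^1$ function $h := y - \phi_\eps$ has $h(t^*)=0$ and $\dot h(t^*) < 0$, so $h < 0$ just to the right of $t^*$, contradicting $h(s_n) > 0$ for $s_n \downarrow t^*$. Therefore $y\le\phi_\eps$ on $[t_0,t_1]$ for every $\eps>0$, and letting $\eps\to 0$ gives $y\le\phi$, i.e.\ \eqref{aux}.

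An alternative that avoids the $\eps$-lift is a case split. If $y(t_0)\le\kappa$, a first-crossing argument using only that $\dot y < 0$ wherever $y>\kappa$ shows that $y$ never exceeds $\kappa = \phi$. If $y(t_0) > \kappa$, then on the maximal initial interval where $y>\kappa$ the hypothesis gives $\frac{d}{dt}(y-\phi) \le -\lambda(y-\phi)$ with $(y-\phi)(t_0)=0$, so a one-line Grönwall estimate yields $y\le\phi$ there; and from the first time $y$ returns to the level $\kappa$ onward (if any) the previous case applies from that time, which combined with $\phi\ge\kappa$ again gives $y\le\phi$. I do not anticipate any real obstacle; the only thing to be careful about is exactly that \eqref{aux:cond} must not be invoked on $\{y\le\kappa\}$, so the comparison/Grönwall step has to be localized to $\{y>\kappa\}$ and the behaviour at the level-$\kappa$ crossing times treated separately — which the lifted barrier, or the explicit case split, does cleanly.
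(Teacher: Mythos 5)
Your proof is correct. Your primary argument, however, runs along a genuinely different track from the paper's. The paper splits into the two cases $y(t_0)\leq\kappa$ and $y(t_0)>\kappa$, works with the \emph{maximal} subinterval on which $y>\kappa$ (resp.\ $y\leq\kappa$), integrates the differential inequality there, and then restarts the argument at the first return to the level $\kappa$ — essentially your ``alternative'' at the end. Your main route instead compares $y$ with the lifted barrier $\phi_\eps=\phi+\eps$, where $\phi$ is the exact solution of $\dot\phi=\lambda(\kappa-\phi)$ with $\phi(t_0)=\max(\kappa,y(t_0))$, and derives a contradiction from the pointwise derivative comparison $\dot y(t^*)\leq\lambda(\kappa-y(t^*))<\dot\phi_\eps(t^*)$ at the first crossing time $t^*$; the key observation that makes this legitimate is $\phi_\eps\geq\kappa+\eps>\kappa$, so the conditional hypothesis \eqref{aux:cond} is guaranteed to be active at $t^*$, and the $\eps$-lift supplies the strict inequality between the derivatives that rules out an immediate re-crossing. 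This is a standard viscosity/comparison-principle device: it buys you a single uniform argument with no case distinction and no need to track where $y$ sits relative to $\kappa$, at the modest cost of the extra limit $\eps\to 0$ and the slightly fussier bookkeeping around $t^*$ (existence of $s_n\downarrow t^*$ with $h(s_n)>0$, and $t^*<t_1$ so that the hypothesis applies there — both of which you handle correctly). The paper's case split is more elementary and yields the bound constructively by integration, which is closer in spirit to how the lemma is actually invoked later in Lemma \ref{lem:shrink}. Either proof is complete and acceptable.
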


\begin{proof}
\rev{
We distinguish the following two cases:
\begin{itemize}
\item
If $y(t_0) \leq \kappa$, then by continuity we have $y(t) \leq \kappa$ for $t\in [t_0, T]$ for some $T\geq t_0$.
Let us choose $T$ to be maximal with this property.
If $T<t_1$, then $y(T)=\kappa$ and there exists $\eps>0$ such that $y(t) > \kappa$ for $t\in (T, T+\eps)$.
An integration of \eqref{aux:cond} on the interval $(T,t)$ gives
\[
       y(t) \leq y(T) e^{-\lambda(t-T)} + \left(1 - e^{-\lambda(t-T)} \right) \kappa = \kappa 
\] 
for $t\in (T, T+\eps)$, which is a contradiction to the choice of $T$.
We conclude by noting that for $y(t_0) \leq \kappa$ the claim \eqref{aux} reduces to $y(t) \leq \kappa$.
\item
If $y(t_0) > \kappa$, then, by continuity, there exists $T>t_0$ such that $y(t) > \kappa$ for $t\in [t_0,T)$.
Let us again choose $T$ to be maximal with this property.
Then, by \eqref{aux:cond} we have $\dot y(t) \leq \lambda(\kappa - y(t))$
for $t\in (t_0,T)$ and by integration we recover \eqref{aux} for $t\in [t_0,T]$.
If $T<t_1$, then $y(T) = \kappa$ and an application of the previous case gives
$y(t) \leq \kappa$ for $t\in [T, t_1]$, so that \eqref{aux} holds on the whole interval $[t_0, t_1]$.
\end{itemize}
}
\end{proof}

The following result is fundamental for our method of proof.
It shows that the spatial diameter of the agent group
shrinks on time intervals of length $6\tau$
by the explicit multiplicative factor $1-\Gamma$,
with
\(   \label{Gamma}
   \Gamma := \left(1 - e^{-\upsi\tau} \right)^2 (1-e^{-\sigma}) e^{-6\tau} \upsi,
\)
where $\upsi$ is given by \eqref{def:upsi} and
\(  \label{sigma}
   \sigma:=\min \left\{ \tau, \frac{M-m}{2M} \right\}.
\)

\begin{lemma}\label{lem:shrink}
Let $m$ and $M$ be given by \eqref{mM} and assume \eqref{ass:mM}.
Then, along the solutions of \eqref{eq:HK}, we have for all $i\in \{1,\dots,N\}$ and $t\in [5\tau,6\tau]$,
\(   \label{shrink:claim}
   m + \frac{\Gamma}{2} (M-m) \leq x_i(t) \leq M - \frac{\Gamma}{2} (M-m),
\)
with $\Gamma$ defined by \eqref{Gamma}.
\end{lemma}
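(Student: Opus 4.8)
I want to establish the shrinkage \eqref{shrink:claim}. The strategy is to show that \emph{some} agent is pushed strictly into the interior of $[m,M]$ and stays there, then propagate that fact to all agents. Concretely: first I would find a time window where at least one "extremal" agent (say one starting close to $M$) is moving, because its neighbours lie a positive distance below it; this uses the uniform lower bound $\upsi$ on the weights \eqref{upsi} and the speed bound \eqref{s}. The quantity $\sigma$ in \eqref{sigma} is precisely the time it takes, at maximal speed $M$, for an agent to move distance $(M-m)/2$, capped at $\tau$ so that the delayed information is still "old enough." So on an interval of length $\sigma$ I expect one agent to drop below $M - c(M-m)$ for an explicit constant $c$.

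Next I would feed this into Lemma \ref{lem:aux}. For a generic agent $i$, on a time interval of length $\tau$ following the window above, the delayed positions $x_j(t-\tau)$ that appear in \eqref{eq:HK} include at least one that is bounded above by $M - c(M-m)$, while the rest are bounded by $M$. Splitting the sum \eqref{eq:HK} as
\[
 \dot x_i(t) = \sum_{j\neq i}\psi_{ij}(t)\bigl(x_j(t-\tau)-x_i(t)\bigr)
\]
and using $\psi_{ik}(t)\geq\upsi$ for the good index $k$, $\sum_{j\neq i}\psi_{ij}(t)\leq 1$ for the rest, I get a differential inequality of the form $\dot x_i(t)\leq \kappa - x_i(t)$ with $\kappa = M - \upsi\,c(M-m)$ valid whenever $x_i(t)>\kappa$. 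Lemma \ref{lem:aux} then gives, after waiting a further time of order $\tau$, that \emph{every} agent satisfies $x_i\leq M - c'(M-m)$ with $c'$ involving the factor $(1-e^{-\upsi\tau})$. Iterating this bootstrap once more (first one agent gets interior, then all agents inherit it) is what produces the \emph{squared} factor $(1-e^{-\upsi\tau})^2$ in \eqref{Gamma}; the factor $(1-e^{-\sigma})$ comes from the initial "one agent moves" step, and $e^{-6\tau}$ absorbs the accumulated $e^{-\lambda\Delta t}$ losses over the six $\tau$-intervals. The lower bound in \eqref{shrink:claim} follows by the symmetric argument applied to $-x_i$, and the constant $6\tau$ is the total length of the chain of sub-intervals used.

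The main obstacle I anticipate is the careful bookkeeping of which delayed configuration is available on which time sub-interval: one must track that the "good" agent's favourable position at time $t-\tau$ is genuinely seen by agent $i$ at time $t$, which forces the precise alignment of the six intervals of length $\tau$ and the role of $\sigma\le\tau$. A secondary subtlety is that the "moving agent" argument needs a lower bound on how fast it moves, not just that $\dot x_i\neq 0$; this requires comparing $x_i(t)$ against the \emph{delayed} diameter, so one needs the diameter at earlier times not to have already collapsed — but if it had, we would be done trivially, so the estimate can be run under the standing assumption $M-m>0$. Everything else is routine integration of scalar linear differential inequalities via Lemma \ref{lem:aux}.
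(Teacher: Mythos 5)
Your toolkit is the right one (the speed bound \eqref{s}, the uniform weight bound \eqref{upsi}, Lemma \ref{lem:aux}, a two-stage bootstrap producing the factor $\left(1-e^{-\upsi\tau}\right)^2$, and the symmetric argument for the lower bound), but the first step of your plan — the mechanism that forces some agent a definite distance into the interior of $[m,M]$ — is where the argument has a genuine gap, and it is precisely the step the paper handles differently. You propose to show that an agent starting near $M$ \emph{moves down} "because its neighbours lie a positive distance below it." To turn that into a quantitative drop you need a lower bound on the downward velocity, which requires knowing that some \emph{delayed} position $x_j(t-\tau)$ sits a definite distance below $M$ throughout a whole time window; that is essentially the conclusion you are trying to reach, and your escape hatch ("if the diameter at earlier times had collapsed we would be done trivially") does not close the circle, because the lemma asserts a quantitative bound in terms of the original $M-m$ and because you need the favourable delayed position to persist for a window of length $\tau$ in order to extract the $(1-e^{-\upsi\tau})$ factor from Lemma \ref{lem:aux}. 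Moreover, even granting a drop, an agent that has moved down can climb back up, so "one agent drops below $M-c(M-m)$ at some instant" is not the uniform-in-time input your second step consumes.

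The paper's Step 1 inverts the logic: it picks the agent $L$ that attains the \emph{minimum} $m$ at some $s\in[-\tau,0]$ and uses the speed limit \eqref{s} to conclude that $x_L$ \emph{cannot escape} the lower half-interval $\left[m,\tfrac{m+M}{2}\right]$ on a window of length $\sigma$ — this is exactly the role of $\sigma$ in \eqref{sigma}, and it requires no claim that any agent moves. On the $\tau$-shifted window, every other agent sees this low anchor through the delay with weight at least $\upsi$, and splitting the sum in \eqref{eq:HK} with \eqref{psi:prop} yields $\dot x_i \le \bigl[M-\upsi\tfrac{M-m}{2}\bigr]-x_i$, hence after integrating over the window of length $\sigma$ every $i\neq L$ is pushed below $(1-\gamma_-)M$. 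The squared factor then arises much as you guessed, but through a specific chain: the bound for the exceptional agent $L$ itself costs one factor $(1-e^{-\upsi\tau})$ (upper bound via the already-controlled $j\neq L$), and the \emph{lower} bound for $x_L$ costs a second factor because it must use the lower bound for $x_R$, which itself already carries one such factor; this bookkeeping is what fixes the interval $[5\tau,6\tau]$ and the $e^{-6\tau}$ in \eqref{Gamma}. To repair your write-up, replace the "extremal agent moves" step with the "extremal agent is trapped for time $\sigma$" step; the rest of your outline then goes through along the lines of the paper.
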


\begin{proof}
The proof shall be carried out in two steps.

\textbf{Step 1.}
Due to 
\eqref{mM}, there exists an index $L\in\{1,\dots,N\}$ such that
$x_L(s) = m$ for some $s\in [-\tau,0]$.
The speed limit $|\dot x_i| \leq M$ given by \eqref{s}
then implies that there exists a closed interval $[\alpha_L, \omega_L] \subset [-\tau,0]$
of length $\sigma$ 
given by \eqref{sigma},
such that
\[  
   m \leq x_L(t) \leq \frac{m+M}{2} \qquad \mbox{for all } t \in [\alpha_L, \omega_L].
\]
Then we have for any $i\in\{1,\dots,N\} \setminus \{L\}$ and $t \in [\alpha_L + \tau, \omega_L + \tau]$,
\[
   \dot x_i (t) &=&
        \sum_{\substack{j\neq i \\ j\neq L}} \psi_{ij}(t) (x_j(t-\tau) - x_i(t)) +
        \psi_{iL}(t) (x_L(t-\tau) - x_i(t))  \\
      &\leq&
      \sum_{\substack{j\neq i \\ j\neq L}} \psi_{ij}(t) (M-x_i(t)) + 
      \psi_{iL}(t) \left( \frac{m+M}{2} - x_i(t) \right) \\
      &\leq&
      \bigl(1 - \psi_{iL}(t)\bigr) (M - x_i(t)) +  \psi_{iL}(t) \left( \frac{m+M}{2} - x_i(t) \right),
\]
where we used \eqref{psi:prop} in the last inequality.
With $\psi_{iL}(t) \geq \upsi$ due to \eqref{upsi}, we have
\[
   \bigl(1 - \psi_{iL}(t)\bigr) M + \psi_{iL}(t) \frac{m+M}{2} =
      M - \psi_{iL}(t) \frac{M-m}{2}
      \leq M - \upsi  \frac{M-m}{2}.
\]
Consequently,
\[
   \dot x_i (t) \leq  \left[ M - \upsi \frac{M-m}{2} \right] - x_i(t),
\]
for $t\in [\alpha_L+\tau, \omega_L+\tau]$.
Integration on the time interval $[\alpha_L+\tau, \omega_L+\tau]$,
recalling that $\sigma=\omega_L-\alpha_L$,
gives
\[
   x_i(\omega_L+\tau) &\leq& e^{-\sigma} x_i(\alpha_L+\tau) +  (1-e^{-\sigma}) \left[ M - \upsi \frac{M-m}{2} \right]   \\
     &\leq& e^{-\sigma}M +  (1-e^{-\sigma}) \left[ M - \upsi \frac{M-m}{2} \right]  \\
          &=& M -  \upsi (1-e^{-\sigma}) \frac{M-m}{2}.
\]
Inspecting the second line of the above formula, we note that the right-hand side is a convex combination of $M$ and $\left[ M - \upsi \frac{M-m}{2} \right]$.
Therefore, introducing the notation
\(  \label{gamma-}
   \gamma_- :=  \frac12 \upsi (1-e^{-\sigma})  \left(1 - \frac{m}{M} \right),
\)
we conveniently put the above estimate into the form
\(   \label{est:gminus}
   x_i(\omega_L+\tau) \leq (1-\gamma_-)M.
\)
Now, for $t\in[\omega_L+\tau, 6\tau]$ we have $x_j(t-\tau) \leq M$ by Lemma \ref{lem:stay},
and thus
\[
   \dot x_i (t) &=& \sum_{j\neq i} \psi_{ij}(t) (x_j(t-\tau) - x_i(t)) \\
      &\leq&  \sum_{j\neq i} \psi_{ij}(t) (M - x_i(t)) \\
      &\leq& M-x_i(t),
\]
where in the last estimate we used $x_i(t) \leq M$ and \eqref{psi:prop}.
Then, integration of the inequality $\dot x_i(t) \leq M-x_i(t)$
on the time interval $[\omega_L+\tau,t]$, with $t\in[\omega_L+\tau,6\tau]$, gives
\[
   x_i(t) \leq  e^{-(t-\omega_L-\tau)} x_i(\omega_L+\tau) + \left( 1 - e^{-(t-\omega_L-\tau)} \right) M,
\]
and \eqref{est:gminus} gives
\[
   x_i(t)
      &\leq&  e^{-(t-\omega_L-\tau)} \left(1 - \gamma_- \right) M + \left( 1 - e^{-(t-\omega_L-\tau)} \right) M  \\ 
      &=& \left( 1 - e^{-(t-\omega_L-\tau)}\gamma_- \right) M.  
\]
Since by definition $\omega_L \geq -\tau$, we have $t-\omega_L-\tau \leq t \leq 6\tau$, so that finally
\(
    x_i(t) \leq \left( 1 - e^{-6\tau}\gamma_- \right) M,  \label{upperB}
\)
for all $t\in[\omega_L+\tau,6\tau]$ and $i\in\{1,\dots,N\} \setminus \{L\}$.

Also, again due to \eqref{mM}, there exists an index $R\in\{1,\dots,N\}$ such that
$x_R(s) = M$ for some $s\in [-\tau,0]$.
Consequently, due to the speed limit \eqref{s},
\[  
   \frac{m+M}{2} \leq x_R(t) \leq M \qquad \mbox{for } t \in [\alpha_R, \omega_R],
\]
with an interval $[\alpha_R, \omega_R] \subset [-\tau,0]$ of length $\sigma$ given by \eqref{sigma}.
Following an analogous procedure as above, we derive the lower bound
\(   \label{lowerB}
   x_i(t) \geq \left( 1 + e^{-6\tau}\gamma_+ \right) m,
\)
for all $t\in[\omega_R+\tau,6\tau]$ and $i\in\{1,\dots,N\} \setminus \{R\}$, with
\(  \label{gamma+}
      \gamma_+ :=  \frac12 \upsi (1-e^{-\sigma}) \left(\frac{M}{m} - 1 \right).
\)

\textbf{Step 2.}
In the second step we derive estimates for $x_L$ and $x_R$, which have to be treated separately.
We note that we do not exclude the possibility that $L=R$, i.e., it may happen that $x_L$ and $x_R$ represent the same agent.
With \eqref{upperB}, we have for $t\in [2\tau,6\tau]$,
\[
   \dot x_L (t) &=& \sum_{j\neq L} \psi_{Lj}(t) (x_j(t-\tau) - x_L(t)) \\
      &\leq& \sum_{j\neq L} \psi_{Lj}(t) \left(\left( 1 - e^{-6\tau}\gamma_- \right) M - x_L(t)\right).
\]
Therefore, due to \eqref{upsi}, $x_L=x_L(t)$ on $[2\tau,6\tau]$ satisfies
\rev{
\[
   \dot x_L (t) \leq  
             \upsi  \left[ \left( 1 - e^{-6\tau}\gamma_- \right) M - x_L(t) \right] \qquad \mbox{if } x_L(t) > \left( 1 - e^{-6\tau}\gamma_- \right) M.
\]
Applying Lemma \ref{lem:aux} for $t\in [2\tau,6\tau]$, with $\lambda:=\upsi$ and $\kappa:=\left( 1 - e^{-6\tau}\gamma_- \right) M$, we obtain
\[
   x_L(t) &\leq& e^{-\upsi(t-2\tau)} \max\Bigl\{ x_L(2\tau), \left( 1 - e^{-6\tau}\gamma_- \right) M \Bigr\} + \left( 1 - e^{-6\tau}\gamma_- \right) M \left( 1 - e^{-\upsi(t-2\tau)} \right) \\
             &\leq& \left[ 1 - \left(1 - e^{-\upsi(t-2\tau)} \right) e^{-6\tau}\gamma_- \right] M.
\]
For the second inequality we used $x_L(2\tau)\leq M$, which gives
$\max\Bigl\{ x_L(2\tau), \left( 1 - e^{-6\tau}\gamma_- \right) M \Bigr\} \leq M$.
}
Restricting to $t \in [3\tau,6\tau]$, so that $t-2\tau \geq \tau$, we have
\(   \label{upperBL}
   x_L(t) \leq \bigl[ 1 - \left(1 - e^{-\upsi\tau} \right) e^{-6\tau}\gamma_- \bigr] M.
\)
Similarly, using \eqref{lowerB}, we derive the lower bound for $x_R$,
\(  \label{lowerBR}
   x_R(t) \geq \bigl[ 1 + \left(1 - e^{-\upsi\tau} \right) e^{-6\tau}\gamma_+ \bigr] m,
\)
for $t \in [3\tau,6\tau]$.

To derive a lower bound for $x_L$, we again write
\[
   \dot x_L (t) = \sum_{j\neq L} \psi_{Lj}(t) (x_j(t-\tau) - x_L(t)),
\]
this time for $t\in [4\tau, 6\tau]$.
To estimate the terms $x_j(t-\tau)$ on right-hand side, we use \eqref{lowerB} for $j\in\{1,\dots,N\} \setminus \{R\}$,
while for $j=R$ we use \eqref{lowerBR}; in the case $L=R$ we only need \eqref{lowerB}.
Since the right-hand side of \eqref{lowerBR} is smaller than the right-hand side of \eqref{lowerB},
we conveniently estimate
\rev{
\[
   \dot x_L (t) 
      \geq \sum_{j\neq L} \psi_{Lj}(t) \left( \bigl[ 1 + \left(1 - e^{-\upsi\tau} \right) e^{-6\tau}\gamma_+ \bigr] m - x_L(t)\right),
\]
and with \eqref{upsi},
\[
   \dot x_L (t) \geq    \upsi  \left( \bigl[ 1 + \left(1 - e^{-\upsi\tau} \right) e^{-6\tau}\gamma_+ \bigr] m - x_L(t) \right)
                \qquad    \mbox{if } x_L(t) < \bigl[ 1 + \left(1 - e^{-\upsi\tau} \right) e^{-6\tau}\gamma_+ \bigr] m,
\]
for $t\in [4\tau,6\tau]$.
An application of an obvious modification of Lemma \ref{lem:aux} on this time interval,
with $\lambda:=\upsi$ and $\kappa:=\bigl[ 1 + \left(1 - e^{-\upsi\tau} \right) e^{-6\tau}\gamma_+ \bigr] m$, gives
\[
   x_L(t) \geq e^{-\upsi(t-4\tau)} \min\Bigl\{x_L(4\tau), \bigl[ 1 + \left(1 - e^{-\upsi\tau} \right) e^{-6\tau}\gamma_+ \bigr] m \Bigr\}
        + \bigl[ 1 + \left(1 - e^{-\upsi\tau} \right) e^{-6\tau}\gamma_+ \bigr] m \left( 1 - e^{-\upsi(t-4\tau)} \right).
\]
With the estimate $x_L(4\tau)\geq m$ we have
\[
   \min\Bigl\{x_L(4\tau), \bigl[ 1 + \left(1 - e^{-\upsi\tau} \right) e^{-6\tau}\gamma_+ \bigr] m \Bigr\} \geq m,
\]
so that
\[
    x_L(t) \geq \left[ 1 + \left(1 - e^{-\upsi\tau} \right) \left( 1 - e^{-\upsi(t-4\tau)} \right)  e^{-6\tau}\gamma_+ \right] m,
\]
}
and restricting to $t\in [5\tau, 6\tau]$, so that $t-4\tau \geq \tau$,
\(   \label{lowerBL}
   x_L(t) \geq \left[ 1 + \left(1 - e^{-\upsi\tau} \right)^2 e^{-6\tau}\gamma_+ \right] m.
\)
Analogously, we derive
\(   \label{upperBR}
   x_R(t) \leq \bigl[ 1 - \left(1 - e^{-\upsi\tau} \right)^2 e^{-6\tau}\gamma_- \bigr] M,
\)
for $t\in [5\tau, 6\tau]$.

\medskip

We can finally summarize the estimates \eqref{upperB}, \eqref{lowerB} of Step 1
and \eqref{upperBL}--\eqref{upperBR} of Step 2 as
\[
    \left[ 1 + \left(1 - e^{-\upsi\tau} \right)^2 e^{-6\tau}\gamma_+ \right] m
        \leq x_i(t) \leq
             \left[ 1 - \left(1 - e^{-\upsi\tau} \right)^2 e^{-6\tau}\gamma_- \right] M,
\]
for $t\in [5\tau, 6\tau]$ and all $i\in \{1,\dots,N\}$.
Recalling the formulae \eqref{gamma-}, \eqref{gamma+} for $\gamma_-$, $\gamma_+$,
we obtain the claim \eqref{shrink:claim}.
\end{proof}

We are now in position to provide a proof of Theorem \ref{thm:HK}. Let us first consider the one-dimensional setting $d=1$.
We apply Lemma \ref{lem:shrink} iteratively on time intervals of total length $7\tau$.
Let us for $k\in\N$ denote $\I_k := [(6k-1)\tau,6k\tau]$ and
\[  
   m_k := \min_{i=1,\cdots,N} \min_{t\in \I_k} x_i(t),\qquad
   M_k := \max_{i=1,\cdots,N} \max_{t\in \I_k} x_i(t).
\]
Clearly, $m_0=m$ and $M_0=M$, with $m$ and $M$ given by \eqref{mM}.
Moreover, let us introduce the notation $D_k := M_k-m_k$ and
\[ 
   \Gamma_k := \left(1 - e^{-\upsi\tau} \right)^2 (1-e^{-\sigma_k}) e^{-6\tau} \upsi,
\]
with $\sigma_k := \min \left\{ \tau, \frac{M_k-m_k}{2M_k} \right\}$. Note that $\Gamma_k\in (0,1)$ as long as $M_k>m_k$.

An application of Lemma \ref{lem:shrink} gives
\[
    m_0 + \frac{\Gamma_0}{2} (M_0-m_0) \leq x_i(t) \leq M_0 - \frac{\Gamma_0}{2} (M_0-m_0) \qquad\mbox{for } t\in\I_1.
\]
We thus have
\[
   D_1 = M_1-m_1 \leq (1 - \Gamma_0) (M_0-m_0) = (1-\Gamma_0) D_0.
\]
Iterating Lemma \ref{lem:shrink} gives
\[
   D_{k+1} \leq (1-\Gamma_k) D_k 
      \qquad\mbox{for all } k\in\N.
\]
Due to Lemma \ref{lem:stay} we have $M_k\leq M$ for all $k\in\N$, so that
denoting $\widetilde\sigma(D) := \min \left\{ \tau, \frac{D}{2M} \right\}$,
we have $\sigma_k \geq \widetilde\sigma(D_k)$ for all $k\in\N$.
Then, with
\(   \label{wtgamma}
   \widetilde \Gamma(D) := \left(1 - e^{-\upsi\tau} \right)^2 (1-e^{-\widetilde\sigma(D)}) e^{-6\tau} \upsi,
\)
we have $\Gamma_k \geq \widetilde \Gamma(D_k)$ for all $k\in\N$ and
\(   \label{est:shrink}
   D_{k+1} \leq \left(1- \widetilde \Gamma(D_k)\right) D_k. 
\)
Clearly, the sequence $\{D_k\}_{k\in\N}$ is a nonnegative decreasing sequence, and denoting its limit $D$,
the limit passage $k\to\infty$ in the above inequality gives $ \widetilde\Gamma(D)\leq 0$,
which immediately implies $D=0$.
To conclude the proof of Theorem \ref{thm:HK} in the one-dimensional setting,
we apply the uniform bound of Lemma \ref{lem:stay}, i.e., if the agent group is
contained in the spatial interval $[m_k, M_k]$ for $t\in\I_k$ 
then it remains contained in the same interval for all future times.

Finally, generalization of the proof to the spatially multi-dimensional setting
is facilitated by the observation that the claims of both Lemma \ref{lem:stay}
and Lemma \ref{lem:shrink} can be trivially adapted to projections
of the trajectories $x_i=x_i(t)$ to arbitrary one-dimensional subspaces of $\R^d$.
Indeed, for an arbitrary fixed vector $\xi\in\R^d$, we replace \eqref{eq:HK} with the projected system
\[
   \tot{}{t} (x_i(t)\cdot \xi) = \sum_{j\neq i} \psi_{ij}(t) (x_j(t-\tau) - x_i(t))\cdot\xi, \qquad i=1,\ldots,N,
\]
and by a simple adaptation of the above proofs we obtain
\[
   \lim_{t\to\infty} (x_i(t)-x_j(t))\cdot\xi = 0.
\]
for all $i,j\in\{1,\dots,N\}.$ We conclude by choosing $\xi$'s as the basis vectors of $\R^d$.

\section{Proof of Theorem \ref{thm:mf:HK}}\label{sec:cont}
Our result for the mean-field limit system \eqref{mf:HK} with the operator $F=F[f]$ given either by \eqref{F} or \eqref{Fn}
is based on a straighforward modification of the well-posedness theory in measures developed in \cite[Section 3]{ChoiH1},
in particular, the existence, uniqueness  and continuous dependence
on the initial datum for measure-valued solutions of \eqref{mf:HK}.
The proof uses the framework developed in \cite{CCR}
and is based on local Lipschitz continuity of the operators \eqref{F} and \eqref{Fn}.
Let us provide here the stability result in Wasserstein distance \cite[Theorem 3.6]{ChoiH1},
which is essential for our proof of asymptotic consensus.

\begin{theorem}\label{thm:stabHK}
Let $f_1, f_2 \in C([0, T];\P(\R^{d}))$ be two measure-valued solutions of either \eqref{mf:HK}, \eqref{F}
or \eqref{mf:HK}, \eqref{Fn} on the time interval $[0, T]$,
subject to the compactly supported initial data $f_1^0, f_2^0 \in C([-\tau, 0];\P(\R^{d}))$.
Then there exists a constant $L=L(T)$ such that
\[ 
   \W_1 (f_1(t),f_2(t)) \leq L \max_{s\in[-\tau,0]} \W_1(f^0_1(s),f^0_2(s)) \quad \mbox{for} \quad t \in [0,T],
\]
where $\W_1(f_1(t),f_2(t))$ denotes the 1-Wasserstein (or Monge-Kantorovich-Rubinstein) distance \cite{Villani}
of the probability measures $f_1(t)$, $f_2(t)$. 
\end{theorem}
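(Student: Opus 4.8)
The plan is to obtain the estimate by a Gronwall argument in the $\W_1$-distance carried out along the characteristic flow, adapting the coupling technique of \cite{CCR} to the delayed setting via the method of steps. The first step is a uniform-in-time bound on the supports: repeating the reasoning of Lemma \ref{lem:stay}, now applied to the characteristic ODE $\dot X_k(t) = F[f_k](t,X_k(t))$ — whose right-hand side, for both \eqref{F} and \eqref{Fn}, is of convex-combination type and pushes $X_k(t)$ towards $\supp f_k(t-\tau)$ coordinate-wise — one shows that $\supp f_1(t)$ and $\supp f_2(t)$ stay, for all $t\geq 0$, inside a single ball $B_R(0)$ whose radius $R$ is fixed by the compact-support assumption \eqref{IC:mf:HK} on $f_1^0$, $f_2^0$ and is independent of $T$. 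All nonlinear terms below are then evaluated on this fixed compact set.

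On $B_R(0)$ the operator $F[\cdot]$ is uniformly Lipschitz in both arguments. Using that the distances $|x-y|$ appearing in \eqref{F}, \eqref{Fn} lie in $[0,2R]$, that $\psi$ is bounded (and, as needed for well-posedness, locally Lipschitz), and — for the normalized operator \eqref{Fn} — that its denominator is bounded below by $\min_{s\in[0,2R]}\psi(s)>0$ thanks to \eqref{psi:pos}, one produces constants $C_1,C_2$ depending only on $R$, $\psi$ and $\tau$ such that for all $t\geq 0$, all $x,x'\in B_R(0)$ and all $g_1,g_2\in\P(\R^d)$ supported in $B_R(0)$,
\[
   |F[g_1](t,x) - F[g_1](t,x')| &\leq& C_1\,|x-x'|, \\
   |F[g_1](t,x) - F[g_2](t,x)| &\leq& C_2\,\W_1(g_1,g_2),
\]
where $g_k$ plays the role of $f_k(t-\tau)$. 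This is the only point where the mere continuity of $\psi$ is not enough; as in \cite{ChoiH1} one either assumes $\psi$ locally Lipschitz, or approximates $\psi$ uniformly by Lipschitz functions and passes to the limit using the very stability estimate being proved.

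Next I represent the solutions as pushforwards. By the method of steps, on $[0,\tau]$ the vector field $F[f_k](t,\cdot)$ depends only on the prescribed history $f_k^0(t-\tau)$, so the characteristic ODE has a unique flow $\Phi_k^t$ on $[0,\tau]$ with $f_k(t) = (\Phi_k^t)_\# f_k^0(0)$; one then continues to $[\tau,2\tau]$ with $f_k$ now known on $[0,\tau]$, and so on up to $T$ — this is the well-posedness part, a direct modification of \cite[Section 3]{ChoiH1}. Let $\pi^0$ be an optimal plan for $\W_1(f_1^0(0),f_2^0(0))$ and set $u(t):=\int_{\R^{2d}}|\Phi_1^t(x)-\Phi_2^t(y)|\,\d\pi^0(x,y)$. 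Since $((\Phi_1^t,\Phi_2^t))_\#\pi^0$ is an admissible coupling of $f_1(t)$ and $f_2(t)$ we have $\W_1(f_1(t),f_2(t))\leq u(t)$, while $u(0)=\W_1(f_1^0(0),f_2^0(0))$. Differentiating under the integral sign and applying the two Lipschitz bounds above with $g_k=f_k(t-\tau)$ yields
\[
   \dot u(t) &\leq& C_1\,u(t) + C_2\,\W_1(f_1(t-\tau),f_2(t-\tau)) \qquad \mbox{for a.e. } t\in(0,T).
\]
Writing $E_0:=\max_{s\in[-\tau,0]}\W_1(f_1^0(s),f_2^0(s))$ and $w(t):=\max\bigl\{E_0,\sup_{0\leq s\leq t}u(s)\bigr\}$, we have $\W_1(f_1(t-\tau),f_2(t-\tau))\leq w(t)$ for all $t\geq 0$, while $w$ is nondecreasing and $w(0)=E_0$. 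Hence $u(t)\leq E_0+(C_1+C_2)\int_0^t w(s)\,\d s$, so $w(t)\leq E_0+(C_1+C_2)\int_0^t w(s)\,\d s$, and Gronwall's lemma gives $w(t)\leq E_0\,e^{(C_1+C_2)t}$. Therefore $\W_1(f_1(t),f_2(t))\leq u(t)\leq E_0\,e^{(C_1+C_2)T}=:L(T)\,E_0$ for $t\in[0,T]$, which is the claim.

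The main obstacle is not the delay — it enters only as the benign extra term $C_2\,\W_1(f_1(t-\tau),f_2(t-\tau))$, absorbed by the running supremum $w$, so that no smallness of $\tau$ is needed — but the Lipschitz input: securing the uniform-in-time support bound that confines all nonlinearities to a fixed compact set, and then establishing local Lipschitz continuity of $F$ there, including the lower bound on the denominator in \eqref{Fn} and, when $\psi$ is only continuous, the approximation step reconciling the hypotheses of this theorem with the regularity used in \cite{ChoiH1}.
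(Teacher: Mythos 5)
The paper does not actually prove this theorem itself --- it is quoted verbatim from \cite[Theorem 3.6]{ChoiH1} --- and your reconstruction (uniform confinement of the supports via the Lemma \ref{lem:stay}-type convexity argument along characteristics, local Lipschitz bounds on $F$ in both the spatial and the measure argument with the lower bound on the denominator of \eqref{Fn} supplied by \eqref{psi:pos}, pushforward of an optimal coupling along the two flows, and a Gronwall argument in which the delay term is absorbed by a running supremum over the history) is precisely the \cite{CCR}-type argument that the cited reference carries out by the method of steps, so your approach is essentially the same and the estimate is correct. The one caveat you flag is genuine: the bound $|F[g_1](t,x)-F[g_2](t,x)|\leq C_2\,\W_1(g_1,g_2)$ really does require $\psi$ to be locally Lipschitz rather than merely continuous as the theorem's hypotheses state, and your proposed fix of approximating $\psi$ by Lipschitz functions would need a separate stability estimate with respect to perturbations of $\psi$ (not just of the data) to close; but this mismatch is inherited from the paper itself, which concedes that the proof ``is based on local Lipschitz continuity of the operators \eqref{F} and \eqref{Fn},'' so as an explicitly flagged assumption the gap is the paper's rather than yours.
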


We are now in position to provide a proof of Theorem \ref{thm:mf:HK}.

\begin{proof}
Fixing an initial datum $f^0 \in C([-\tau,0], \P(\R^d))$, uniformly compactly supported in the sense of \eqref{IC:mf:HK},
we construct $\{f^0_N\}_{N\in\N}$ a family of $N$-particle approximations of $f^0$, i.e.,
\[
   f^0_N(s,x) := \frac{1}{N} \sum_{i=1}^N  \delta(x-x^0_i(s)) \qquad\mbox{for } s\in[-\tau,0],
\]
where the $x_i^0\in C([-\tau,0];\R^d)$ are chosen such that
\[
   \max_{s\in[-\tau,0]} \W_1(f^0_N(s),f^0(s)) \to 0 \quad\mbox{as}\quad N\to\infty.
\]
Denoting then $x^N_i=x^N_i(t)$ the solution of the discrete Hegselmann-Krause system \eqref{eq:HK}
subject to the initial datum $x_i^0=x_i^0(s)$, $i=1,\dots,N$,
it is easy to check that the empirical measure
\[
   f^N(t,x) := \frac{1}{N} \sum_{i=1}^N  \delta(x-x^N_i(t))
\]
is a measure valued solution of \eqref{eq:HK}, see \cite{CCR}.
The proof of Theorem \ref{thm:HK}
gives asymptotic convergence to global consensus, i.e., $d_x[f^N(t)] \to 0$ as $t\to\infty$,
with the diameter $d_x[\cdot]$ defined in \eqref{def:diam}.
Going back to the proof of Theorem \ref{thm:HK}, note that the
shrinkage estimate \eqref{wtgamma}--\eqref{est:shrink} does not
depend on the number of particles $N\in\N$.
Consequently, the convergence speed of $d_x[f^N(t)] \to 0$
depends on the initial datum $f^0_N$ only through the size of its support,
which is uniformly bounded due to \eqref{IC:mf:HK}.

For any fixed $T>0$, Theorem \ref{thm:stabHK} provides the stability estimate
\[
   \W_1 (f(t),f^N(t)) \leq L \max_{s\in[-\tau,0]} \W_1 (f^0(s),f^0_N(s)) \qquad \mbox{for} \quad t \in [0,T),
\]
where $f\in C([0, T];\P(\R^{d}))$ is a tight limit of $f^N$ as $N\to\infty$ and
the constant $L>0$ is independent of $N$.
Thus, fixing $T>0$ and letting $N\to\infty$ implies $d_x[f(t)] = \lim_{N\to \infty} d_x[f^N(t)]$ for $t\in [0,T)$, and, consequently,
\[  
   \lim_{t\to\infty} d_x[f(t)] = 0,
\]
\end{proof}


\begin{thebibliography}{99}

\bibitem{E01}
P.-A. Bliman, G. Ferrari-Trecate: \emph{Average consensus problems in networks of agents with delayed communications.}
Automatica 44 (2008), 1985--1995.

\bibitem{CCR}
\newblock {J. Ca\~nizo, J. Carrillo and J. Rosado},
\newblock {A well-posedness theory in measures for some kinetic models of collective motion},
\newblock  \emph{Math. Mod. Meth. Appl. Sci.}, \textbf{21} (2011), 515--539.

\bibitem{ChoiH1}
Y.-P. Choi and J. Haskovec: \emph{Cucker-Smale model with normalized communication weights and time delay.}
Kinetic and Related Models 10 (2017), 1011-1033.

\bibitem{CPP}
Y.-P. Choi, A. Paolucci and C. Pignotti:
\emph{Consensus of the Hegselmann-Krause opinion formation model with time delay}.
Math Meth Appl Sci. 2020; 1-- 20.

\bibitem{Hamman}
H. Hamman: \emph{Swarm Robotics: A Formal Approach}.
Springer, 2018.

\bibitem{H}
J. Haskovec: \emph{A simple proof of asymptotic consensus in the Hegselmann-Krause
and Cucker-Smale models with normalization and delay.}
SIAM J. on Applied Dynamical Systems, 20:1 (2021), 130--148.

\bibitem{HK}
R. Hegselmann and U. Krause, \emph{Opinion dynamics and bounded confidence models, analysis, and simulation},
J. Artif. Soc. Soc. Simul., 5, (2002), 1--24.

\bibitem{JM}
P.E. Jabin and S. Motsch: \emph{Clustering and asymptotic behavior in opinion formation.}
J. Differential Equations 257 (2014), 4165--4187.

\bibitem{MT}
S. Motsch and E. Tadmor: \emph{A New Model for Self-organized Dynamics and Its Flocking Behavior}.
J. Stat. Phys. 144 (2011).

\bibitem{E2}
S.-I. Niculescu: \emph{Delay Effects on Stability. A Robust Control Approach.}
Springer-Verlag London, 2001.

\bibitem{Olfati-Saber}
R. Olfati-Saber, J. A. Fax and R. M. Murray:
\emph{Consensus and Cooperation in Networked Multi-Agent Systems.}
Proceedings of the IEEE, vol. 95, no. 1 (2007), 215--233.

\bibitem{E3A}
A. Seuret, V. Dimos, V. Dimarogonas and K.H. Johansson:
\emph{Consensus under Communication Delays}.
Proceedings of the 47th IEEE Conference on Decision and Control,
Cancun, Mexico, Dec. 9-11, 2008.

\bibitem{Smith}
{H. Smith:}
\emph{An Introduction to Delay Differential
Equations with Applications to the Life Sciences.}
Springer New York Dordrecht Heidelberg London, 2011.


\bibitem{E6}
C. Somarakis and J. Baras:
\emph{Delay-independent convergence for linear consensus networks with applications to non-linear flocking systems}.
In Proceedings of the 12th IFAC Workshop on Time Delay Systems, pp. 159--164, Ann Arbor (2015).

\bibitem{Villani}
C. Villani: \emph{Topics in optimal transportation}.
Graduate Studies in Mathematics 58 (2003),
American Mathematical Society, Providence, RI.

\end{thebibliography}
\end{document}